\DeclareMathAlphabet{\mathpzc}{OT1}{pzc}{m}{it}
\theoremstyle{plain}
\newtheorem{thm}{Theorem}[section]
\newtheorem{cor}[thm]{Corollary}
\newtheorem{lem}[thm]{Lemma}
\theoremstyle{definition}
\newtheorem{defn}[thm]{Definition}
\newtheorem{rem}[thm]{Remark}
\theoremstyle{remark}
\numberwithin{equation}{section}
\newcommand{\beast}{\begin{eqnarray*}}
\newcommand{\eeast}{\end{eqnarray*}}
\title{A family of group divisible designs with arbitrary block sizes}
\author{Yu-pei Huang\footnote{E-mail address: yphuang@bnuz.edu.cn, College of Applied Mathematics, Beijing Normal University, Zhuhai 519087, P.R. China.}
\and
Chia-an Liu\footnote{Corresponding author. E-mail address: liuchiaan8@gmail.com, Department of Mathematical Sciences, University of Delaware Newark, Delaware 19716, U.S.A.}
\and
Yaotsu Chang \footnote{E-mail address: ytchang@cloud.isu.edu.tw, Department of Financial and Computational Mathematics, I-Shou University, Taiwan 84001, R.O.C.}
\and
Chong-Dao Lee\footnote{E-mail address: chongdao@cloud.isu.edu.tw, Department of Communication Engineering, I-Shou University, Taiwan 84001, R.O.C.}}
\date{September 4, 2018}
\begin{document}
\maketitle

\bibliographystyle{plain}

\bigskip

\begin{abstract}
Recently, a construction of group divisible designs (GDDs) derived from the decoding of quadratic residue (QR) codes was given. In this paper, we extend the idea to obtain a new family of GDDs, which is also involved with a well-known balanced incomplete block design (BIBD).\\\\
%which has not been previously known.\\
%
{\noindent\bf Keywords: Balanced incomplete block design (BIBD), group divisible design (GDD), binary field}\\
{\noindent\bf MSC 2010: 05B05}
\end{abstract}

%%%%%%%%%%%%%%%%   1.Introduction.  %%%%%%%%%%%%%%%%%%%%%%%%%%%

\section{Introduction}      \label{sec_introduction}

Combinatorial designs and the theory of error-correcting codes are two research topics which are closely related.
Assmus and Mattson in 1969 \cite{am:69} first proposed the relationship between balanced incomplete block designs (BIBDs) and error-correcting codes. For instance, the codewords of any fixed weight in an extended quadratic residue code \cite{am:69} form a $2$-design.
Later, BIBDs can also be constructed from
Reed-Muller codes \cite{cgl:08}, extremal binary doubly-even self-dual codes \cite{cgl:08}, and Pless symmetry codes \cite{p:72}.

\medskip

Quadratic residue (QR) codes generated by irreducible polynomials are called Type I QR codes, and those generated by reducible polynomials are Type II.
In 2003, Chang et al.~\cite{ctrcl:03} developed algebraic decoding of three Type I binary QR codes.
For Type I QR codes, if the first syndrome is zero then one can assume that there is no error occurred. However, for Type II QR codes, one cannot suppose that the error pattern is zero, i.e., no error occurred, even if the first syndrome is zero.
Motivated by the decoding of QR codes, Lee et al.~\cite{lcc:17}
provided a construction of group divisible designs.
They investigated the collection of all error patterns of weight three for the Type II QR code of length $31$ which is with zero first syndromes and found some combinatorial structure. A new family of GDDs with block sizes $3$ to $7$ was given and further generalized by Ji~\cite{j:17} with arbitrary block sizes on finite fields.

\medskip

This research is a sequel of~\cite{lcc:17}. The authors in~\cite{lcc:17,j:17} considered the error patterns $(x_1,x_2,\ldots,x_k)$ satisfying the equation $\gamma^{x_1}+\gamma^{x_2}+\cdots+\gamma^{x_k}=1\in\mathbb{F}_{2^m}$ with no proper subset $S$ of $\{x_1,x_2,\ldots,x_k\}$ such that $\sum_{i\in S}\gamma^i=1$, where distinct integers $1\leq x_i\leq 2^m-2$ for $1\leq i\leq k\leq m$ and $\gamma$ is a primitive element of the binary extension field $\mathbb{F}_{2^m}$. While $k=2,$ those error patterns form a group set $\mathcal{G}$. In this study, we propose another construction of GDDs by assuming the sum of each error pattern to be any prescribed nonzero element $\alpha$ instead of $1,$ and omitting the constraints for the sum of proper subset $S$ of $\{x_1,x_2,\ldots,x_k\}.$
One may notice that these new GDDs are similar to the previous one~\cite{lcc:17} when $k$ is $3$ or $4,$ but the divergence appears for $k\geq5.$

\medskip

The paper is organized as follows.
To study the new family of GDDs, a construction of BIBDs related to the Hamming code is provided in Section~\ref{sec_BIBD}.
The details of our methods to construct GDDs are depicted in Section~\ref{sec_GDD}.
A short conclusion is given in the last section.

\bigskip

%%%%%%%%%%%%%%%%%%%%%%%%%%%%%%%%%%%
%%%%%%%%%%% Chapter 2 %%%%%%%%%%%%%
%%%%%%%%%%%%%%%%%%%%%%%%%%%%%%%%%%%
\section{A construction of balanced incomplete block designs}           \label{sec_BIBD}
This section is composed of two subsections. The first subsection describes a brief review of BIBDs.
The second subsection introduces a family of BIBDs and shows their balance parameters.
\subsection{Basic results and notations}
\begin{defn}    \cite[Definition~1.2]{s:04}
Let $v,k$, and $\lambda$ be positive integers such that $v>k \geq 2$. A \emph{balanced incomplete block design} $(v,k, \lambda)$-BIBD is a pair $(X, \mathcal{B})$ such that the following properties are satisfied:
\begin{enumerate}
\item[(i)] $X$ is a set of elements called {\it points} with cardinality $|X|=v,$
\item[(ii)] $\mathcal{B}$ is a class of nonempty $k$-subsets of $X$ called {\it blocks}, and
\item[(iii)] every pair of distinct points is contained in exactly $\lambda$ blocks.
\end{enumerate}
Particularly, (iii) is called the \emph{balance property} and $\lambda$ is called the \emph{balance parameter} of $(X,\mathcal{B}).$
\end{defn}

\medskip

There are several parameters in a BIBD which are described in the following.
\begin{thm}     \cite[Theorem~1.9]{s:04}    \label{thm_basic}
Let $(X,\mathcal{B})$ be a $(v,k,\lambda)$-BIBD. Then every point occurs in exactly
$$r=\frac{\lambda(v-1)}{k-1}$$
blocks, and the number of blocks
$$b=|\mathcal{B}|= \frac{vr}{k}.$$
\end{thm}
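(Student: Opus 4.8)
The plan is to prove Theorem 1.9 by the standard double-counting argument, establishing each of the two formulas in turn. First I would fix a point $x \in X$ and count, in two different ways, the number of ordered pairs $(\{x,y\}, B)$ where $y \in X \setminus \{x\}$ is a second point and $B \in \mathcal{B}$ is a block containing both $x$ and $y$. Summing over the $v-1$ choices of $y$ and using the balance property (iii), each such pair $\{x,y\}$ lies in exactly $\lambda$ blocks, so this count equals $\lambda(v-1)$. On the other hand, grouping by the blocks $B$ through $x$: if $r$ denotes the number of blocks containing $x$, then each such $B$ is a $k$-subset and so contains exactly $k-1$ points $y$ distinct from $x$, contributing $r(k-1)$ to the count. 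Equating the two expressions gives $r(k-1) = \lambda(v-1)$, hence $r = \lambda(v-1)/(k-1)$. A small point worth noting is that this shows $r$ is independent of the chosen point $x$, so every point indeed occurs in the same number of blocks.

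Next I would establish $b = vr/k$ by a second double count, this time of incident point-block pairs $(x, B)$ with $x \in B$. Counting by points, each of the $v$ points lies in exactly $r$ blocks (by the formula just proved), giving $vr$. Counting by blocks, each of the $b$ blocks contains exactly $k$ points by property (ii), giving $bk$. Equating yields $bk = vr$, so $b = vr/k$.

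The argument is essentially routine and I do not anticipate a genuine obstacle; the only thing to be careful about is the logical order. In particular, the formula for $b$ relies on $r$ being the same constant for every point, which is exactly what the first double count delivers, so $r$ must be computed before $b$. One might also remark that since $b$, $r$, $v$, $k$, $\lambda$ are all integers, the formulas impose the divisibility necessary conditions $(k-1) \mid \lambda(v-1)$ and $k \mid vr$, though proving these conditions is not required for the statement at hand.
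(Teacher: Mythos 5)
Your proof is correct and is exactly the standard double-counting argument; the paper itself gives no proof of this statement, merely citing it as Theorem~1.9 of Stinson's book, where the same two incidence counts ($r(k-1)=\lambda(v-1)$ and $bk=vr$) constitute the proof. Your remark about the logical order (establishing that $r$ is constant before using it to count $b$) is the one subtlety, and you handle it properly.
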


\medskip

Let $m\geq 3$ be a positive integer and $\mathbb{F}_{2^m}$ be the finite field of order $2^m.$ Then the multiplicative group $\mathbb{F}_{2^m}^*=\mathbb{F}_{2^m}\setminus\{0\}$ is cyclic of order $2^m-1,$ where $0$ is the zero element of $\mathbb{F}_{2^m}.$ The following definition gives sets of blocks in which the sum of elements is $0.$ The ideas of zero-sum blocks for the construction of BIBDs are also studied in~\cite{s:13,s:16}.
\begin{defn}
For each integer $k$ with $3 \leq k \leq 2^m-4,$ let
\begin{equation}
W_{k}=\big\{B \subseteq \mathbb{F}_{2^m}^*~\mid~|B|=k~\text{and}~\sum_{i\in B}i=0\big\}
\nonumber
\end{equation}
be the collection of $k$-subsets of $\mathbb{F}_{2^m}^*$ in which the sum of elements is zero.
\end{defn}

\medskip

It should be noticed that the $k$-sets of nonzero elements summing up to zero in the Galois field with $2^m$ elements can be seen as codewords of weight $k$ in the $(2^m-1,2^m-m-1,3)$ Hamming code. According to \cite[p. 129]{M:77} and \cite{S:80}, the number $b_k$ of such codewords can be determined recursively from the relation $(k+1)b_{k+1}+b_k+(v-k+1)b_{k-1}=\binom{v}{k}$, where $v=2^m-1$, and a closed-form expression for the number $b_k$ is given in \cite[Proposition 4.1]{E:94}.

\medskip

It is not hard to show that $W_k$ is nonempty for every $3 \leq k \leq 2^m-4$ by induction. First, for distinct $i,j\in\mathbb{F}_{2^m}^*$ there exists a block $\{i,j,i+j\}\in W_3.$ Suppose that for $4\leq k\leq 2^{m-1}-1$ there exists a $(k-1)$-subset $B_0\in W_{k-1}.$
We will use $B_0$ to construct a $k$-subset $\tilde{B}_0$ of $\mathbb{F}_{2^m}^*$ in which the sum of elements is still zero.
Let $\alpha$ be an element in $B_0.$ We define
\begin{equation}
\mathpzc{H}_{\alpha}=\mathbb{F}_{2^m}/\{0,\alpha\}=\big\{\{x,x+\alpha\}~|~x\in\mathbb{F}_{2^m}\big\},
\nonumber
\end{equation}
and give some background information of $\mathpzc{H}_{\alpha}$ in the following.

\begin{rem}     \label{rem_H}
Consider the additive group $\langle \mathbb{F}_{2^m}, +\rangle$. For some $\alpha \in \mathbb{F}_{2^m}^*,$ since $\mathbb{F}_{2^m}$ has characteristic $2,$ one has that $\{0,\alpha\}$ is a subgroup of $\langle\mathbb{F}_{2^m},+\rangle.$ Hence,
%$$\mathpzc{H}_{\alpha}=\mathbb{F}_{2^m}/\{0,\alpha\}=\big\{\{x,x+\alpha\}~|~
%x\in\mathbb{F}_{2^m}\big\}$$
$\mathpzc{H}_{\alpha}$ is well-defined and forms a partition of $\mathbb{F}_{2^m}$ with cardinality $|\mathpzc{H}_{\alpha}|=2^{m-1}$.
\end{rem}

\medskip

Since $|\mathpzc{H}_{\alpha}\setminus\{\{0,\alpha\}\}|=2^{m-1}-1>k-1,$ by Pigeonhole Principle there exists $x_0\in\mathbb{F}_{2^m}\setminus\{0,\alpha\}$ such that $\{x_0,x_0+\alpha\}\cap B_0=\phi.$ Then one has a $k$-subset $\tilde{B_0}=B_0\setminus\{\alpha\}\cup\{x_0,x_0+\alpha\}$ of $\mathbb{F}_{2^m}^*.$ Note that $\sum_{i\in\tilde{B_0}}i=\sum_{i\in B_0}i=0$ and hence $\tilde{B_0}\in W_k.$ Now, $W_k$ is nonempty for $3\leq k\leq 2^{m-1}-1.$ Since the sum of elements in $\mathbb{F}_{2^m}^*$ is zero, $B\in W_k$ if and only if $\mathbb{F}_{2^m}^*\setminus B\in W_{2^m-1-k},$ and the proof is completed. Moreover, the fact
\begin{equation}
|W_k|=|W_{2^m-1-k}|~~~\text{for}~3 \leq k \leq 2^m-4
\nonumber
\end{equation}
immediately follows.

\medskip

The set $W_k$ will play an important role in constructing BIBDs as illustrated in the next subsection.

\medskip

\subsection{BIBDs and their balance parameters}    \label{subsec_proposedBIBD}
The aim of this subsection is to prove Theorem~\ref{thm_BIBD} which states that $(\mathbb{F}_{2^m}^*,W_k)$ is a $(2^{m}-1,k,\lambda_k)$-BIBD for $3\leq k\leq 2^m-4.$
Then the balance parameters $\lambda_k$ are given in Corollary~\ref{cor_lambda_k}.

\medskip

Let $\mathpzc{H}_\alpha$ be ordered by some one-to-one mapping
$$\mathcal{O}_\alpha:~\mathpzc{H}_\alpha\rightarrow \{1,2,\ldots,2^{m-1}\}.$$
\begin{defn}        \label{defn_representative}
Given $B\subseteq \mathbb{F}_{2^m}^*,$ if $B\setminus(B+\alpha)$ is nonempty, then there exists a unique $\beta\in B\setminus(B+\alpha)$ with the maximal ordering in $\mathcal{O}_\alpha$, i.e.,
$$\mathcal{O}_\alpha(\{\beta,\beta+\alpha\})=\max_{\gamma\in B\setminus(B+\alpha)} \mathcal{O}_\alpha(\{\gamma,\gamma+\alpha\}).$$
We call $\beta$ the \emph{representative} of $B$ with respect to $\mathcal{O}_\alpha.$
\end{defn}

Note that if $\sum_{i\in B}i\notin\{0,\alpha\}$ then $B\setminus(B+\alpha)$ is nonempty, which provides a sufficient condition for the existence of the representative $\beta\in B.$

\medskip

For $3\leq k\leq 2^m-4$ and distinct $i,j\in \mathbb{F}_{2^m}^*,$ let
$$W_k^{i,j}=\{B\in W_k~\mid~i,j\in B\}$$
be the set of blocks in $W_k$ that contains $i,j.$ Note that $W_k^{i,j}$ is finite since it is a subset of $W_k.$ We study the cardinality of $W_k^{i,j}$ in the following.
\begin{lem}         \label{lem_eqsize}
For distinct $i,j,\ell \in \mathbb{F}_{2^m}^*,$ $|W_k^{i,j}|=|W_k^{i,\ell}|.$
\end{lem}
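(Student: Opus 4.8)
The plan is to exploit the $\mathbb{F}_2$-linear structure of $\mathbb{F}_{2^m}$ and produce an explicit bijection between $W_k^{i,j}$ and $W_k^{i,\ell}$. The key observation is that, since the only nonzero scalar in $\mathbb{F}_2$ is $1$, any two distinct nonzero elements of $\mathbb{F}_{2^m}$ are automatically linearly independent over $\mathbb{F}_2$. In particular both $\{i,j\}$ and $\{i,\ell\}$ are linearly independent pairs, and each can be extended to an $\mathbb{F}_2$-basis of $\mathbb{F}_{2^m}$. I would therefore fix a basis $\{i,j,e_3,\dots,e_m\}$ containing $i,j$ and a basis $\{i,\ell,f_3,\dots,f_m\}$ containing $i,\ell$, and define the $\mathbb{F}_2$-linear map $\phi\colon\mathbb{F}_{2^m}\to\mathbb{F}_{2^m}$ by $\phi(i)=i$, $\phi(j)=\ell$, and $\phi(e_t)=f_t$ for $3\le t\le m$. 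Since $\phi$ sends a basis to a basis, it is an invertible $\mathbb{F}_2$-linear transformation that fixes $i$ and carries $j$ to $\ell$.

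Next I would check that $\phi$ preserves the defining properties of $W_k$. Because $\phi$ is a bijection with $\phi(0)=0$, it restricts to a bijection of $\mathbb{F}_{2^m}^*$ and maps each $k$-subset to a $k$-subset. Linearity gives $\sum_{x\in B}\phi(x)=\phi\big(\sum_{x\in B}x\big)$, so $\sum_{x\in B}x=0$ if and only if $\sum_{y\in\phi(B)}y=0$; hence $B\in W_k$ if and only if $\phi(B)\in W_k$, and $\phi$ induces a bijection of $W_k$ onto itself.

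Finally I would verify that this bijection matches the two families. If $B\in W_k^{i,j}$, then $i=\phi(i)$ and $\ell=\phi(j)$ both lie in $\phi(B)$, so $\phi(B)\in W_k^{i,\ell}$; conversely $\phi^{-1}$ fixes $i$ and sends $\ell$ to $j$, so it carries $W_k^{i,\ell}$ back into $W_k^{i,j}$. Thus $\phi$ restricts to a bijection $W_k^{i,j}\to W_k^{i,\ell}$, and the two cardinalities coincide.

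I expect the only real obstacle to be the construction of $\phi$: one must guarantee an invertible linear map that simultaneously fixes $i$ and prescribes $\phi(j)=\ell$. This is exactly where the characteristic-two hypothesis does the work, since it forces $\{i,j\}$ and $\{i,\ell\}$ to be independent and thus extendable to bases; over a larger field one would have to separately exclude the possibility that $\ell$ is a scalar multiple of $i$. Everything after the construction is a routine verification that linear bijections respect zero-sum $k$-subsets.
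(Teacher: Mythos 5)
Your proof is correct, but it takes a genuinely different route from the paper's. The paper builds an explicit combinatorial bijection $W_k^{i,j}\to W_k^{i,\ell}$ that fixes blocks already containing $\ell$ and otherwise swaps $j$ for $\ell$ while compensating the sum by replacing a distinguished ``representative'' $\beta$ of $B\setminus\{i,j,\alpha\}$ (with $\alpha=j+\ell$) by $\beta+\alpha$; verifying that this is a bijection requires the careful bookkeeping with the ordering $\mathcal{O}_\alpha$ of the cosets of $\{0,\alpha\}$. You instead observe that any invertible $\mathbb{F}_2$-linear map $\phi$ of $\mathbb{F}_{2^m}$ permutes $\mathbb{F}_{2^m}^*$ and preserves zero-sum $k$-sets, and that---precisely because over $\mathbb{F}_2$ any two distinct nonzero elements are linearly independent---one can choose $\phi$ fixing $i$ and sending $j$ to $\ell$. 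Your argument is shorter and conceptually cleaner: it exhibits a subgroup of $\mathrm{GL}_m(\mathbb{F}_2)\le\mathrm{Aut}(\mathbb{F}_{2^m}^*,W_k)$ whose action is transitive on the relevant pairs, which yields the balance property of Theorem~\ref{thm_BIBD} in one stroke. What the paper's heavier machinery buys is reuse: the coset partition $\mathpzc{H}_\alpha$ and the representative device introduced for this lemma are exactly the tools deployed again in Lemma~\ref{lem_IJreccurence} to compare $|I_k^\alpha|$ with $|J_k^\alpha|$ and hence to derive the recurrences for $\lambda_k$, something your linear maps do not directly provide (they cannot send a set summing to $\alpha$ to one summing to $0$). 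Your closing remark about larger fields is also apt: over $\mathbb{F}_{p^m}$ with $p>2$ the pair $\{i,\ell\}$ could be dependent, so the characteristic-two hypothesis is doing real work in your construction.
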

\begin{proof}
Let $\alpha=j+\ell$ and $\mathpzc{H}_{\alpha}=\big\{\{x,x+\alpha\}\mid x\in\mathbb{F}_{2^m}\big\}$ be ordered by some one-to-one mapping $\mathcal{O}_\alpha:~\mathpzc{H}_\alpha\rightarrow \{1,2,\ldots,2^{m-1}\}.$ Define a function $\phi:~W_k^{i,j}\rightarrow W_k^{i,\ell}$ as
\begin{equation}
\phi(B)= \left\{
\begin{array}{ll}B, &~\mbox{if}~\ell \in B\\
B\setminus \{j,\beta\} \cup \{\ell,\beta+\alpha\}, &~\mbox{if}~\ell \notin B
\end{array}\right.
\nonumber
\end{equation}
for each $B\in W_k^{i,j},$ where $\beta$ is the representative of $B^-=B\setminus\{i,j,\alpha\}$ with respect to $\mathcal{O}_\alpha.$ Since the sum of elements in $B^-$ is $i+j$ or $i+\ell$ (which is not in $\{0,\alpha\}$), the set $B^-\setminus (B^- +\alpha)$ is nonempty and the mapping $\phi$ is well-defined.

\medskip

Claim that $\phi$ is a bijection. Define another function $\tilde{\phi}:~W_k^{i,\ell}\rightarrow W_k^{i,j}$ as
\begin{equation}
\tilde{\phi}(\tilde{B})= \left\{
\begin{array}{ll}\tilde{B}, & ~\mbox{if}~j \in \tilde{B}\\
\tilde{B}\setminus \{\ell,\tilde{\beta}\} \cup \{j,\tilde{\beta}+\alpha\}, & ~\mbox{if}~j \notin \tilde{B}
\end{array}\right.
\nonumber
\end{equation}
for each $\tilde{B}\in W_k^{i,\ell},$ where $\tilde{\beta}$ is the representative of $\tilde{B}^-=\tilde{B}\setminus\{i,\ell,\alpha\}$ with respect to $\mathcal{O}_\alpha.$ Similarly, the mapping $\tilde{\phi}$ is well-defined since the sum of elements in $\tilde{B}^-$ is $i+\ell$ or $i+j$ (which is not in $\{0,\alpha\}$). It is clear that $\tilde{\phi}(\phi(B))=B$ if $B\in W_k^{i,j}$ with $\ell\in B.$ On the other hand, for every $B\in W_k^{i,j}$ with $\ell\notin B,$ one can observe that $\beta$ is the representative of $B^-$ with respect to $\mathcal{O}_\alpha$ if and only if $\tilde{\beta}=\beta+\alpha$ is the representative of $\tilde{B}^-$ with respect to $\mathcal{O}_\alpha,$ where $\tilde{B}=B\setminus\{j,\beta\}\cup\{\ell,\tilde{\beta}\}.$ Therefore, $\tilde{\phi}(\phi(B))=B$ if $B\in W_k^{i,j}$ with $\ell\notin B.$ Consequently, $\phi$ is a bijection from $W_k^{i,j}$ to $W_k^{i,\ell}$ with the inverse $\tilde{\phi},$ and the result follows.
\end{proof}

\medskip

\begin{thm}     \label{thm_BIBD}
For each integer $k$ with $3 \leq k \leq 2^m-4,$ the pair $(\mathbb{F}_{2^m}^*,W_k)$ is a $(2^{m}-1, k, \lambda_k)$-BIBD.
\end{thm}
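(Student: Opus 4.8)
The plan is to verify the three defining conditions of a $(v,k,\lambda)$-BIBD directly from the definition, with $v=2^m-1$. Conditions (i) and (ii) are immediate: the point set $X=\mathbb{F}_{2^m}^*$ has cardinality $2^m-1$, and by the nonemptiness argument preceding this theorem, $W_k$ is a nonempty collection of $k$-subsets of $X$; moreover the hypothesis $3\leq k\leq 2^m-4$ guarantees $v>k\geq 2$ as required. All the content therefore lies in the balance property (iii): I must produce an integer $\lambda_k$ such that every pair of distinct points lies in exactly $\lambda_k$ blocks, that is, I must show that $|W_k^{i,j}|$ does not depend on the choice of the pair $\{i,j\}$. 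Once this is established I simply set $\lambda_k:=|W_k^{i,j}|$.

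The key input is Lemma~\ref{lem_eqsize}, which states that $|W_k^{i,j}|=|W_k^{i,\ell}|$ whenever $i,j,\ell$ are distinct; I would pair it with the tautological symmetry $W_k^{i,j}=W_k^{j,i}$, since the defining condition $i,j\in B$ is symmetric in $i$ and $j$. Together these facts say that the value $|W_k^{i,j}|$ is preserved whenever one coordinate of the pair is replaced by a new element. To upgrade this to independence over \emph{all} pairs, I would run a short swap-connectivity argument. Given two pairs $\{i,j\}$ and $\{i',j'\}$, if they share a common point the equality $|W_k^{i,j}|=|W_k^{i',j'}|$ follows from a single application of the lemma, after using symmetry to place the shared point in the fixed coordinate. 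If the two pairs are disjoint, I would route through the intermediate pair $\{i,j'\}$: applying the lemma with $i$ fixed gives $|W_k^{i,j}|=|W_k^{i,j'}|$, and applying it with $j'$ fixed (again after symmetry) gives $|W_k^{i,j'}|=|W_k^{i',j'}|$. At each step the three elements involved are genuinely distinct, which is exactly where the lemma applies; this is guaranteed because $v=2^m-1\geq 7$ for $m\geq 3$, so there is always enough room to carry out the swaps.

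The argument is short precisely because the real work has been absorbed into Lemma~\ref{lem_eqsize}; the only point requiring care is the bookkeeping in the chaining step, namely verifying that in both the shared and the disjoint case every invocation of the lemma has pairwise distinct arguments, and that the symmetry $W_k^{i,j}=W_k^{j,i}$ is correctly invoked to move a point into the fixed slot. I would not compute $\lambda_k$ explicitly at this stage, since its closed form is the content of the subsequent Corollary~\ref{cor_lambda_k}; for the theorem itself only the constancy of $|W_k^{i,j}|$ across all pairs is needed to exhibit the balance parameter.
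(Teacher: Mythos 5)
Your proposal is correct and follows essentially the same route as the paper: the paper's own proof also reduces the balance property to Lemma~\ref{lem_eqsize} together with the symmetry $W_k^{i,j}=W_k^{j,i}$, chaining $|W_k^{h,i}|=|W_k^{i,j}|=|W_k^{j,\ell}|$ across overlapping pairs and then setting $\lambda_k=|W_k^{i,j}|$. Your write-up is in fact slightly more explicit about the connectivity bookkeeping (shared versus disjoint pairs), but the argument is the same.
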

\begin{proof}
Let $h,i,j,\ell$ be distinct elements in $\mathbb{F}_{2^m}^*$. By Lemma~\ref{lem_eqsize}, one has
$$|W_k^{h,i}|=|W_k^{i,j}|=|W_k^{j,\ell}|$$
for $3\leq k \leq 2^m-4.$ Thus, the balance property for being a BIBD is confirmed. That is, $(\mathbb{F}_{2^m}^*,W_k)$ is a $(2^m-1,k,\lambda_k)$-BIBD for some constant $\lambda_k=|W_k^{i,j}|.$
\end{proof}

\medskip

Theorem~\ref{thm_BIBD} indicates that for two positive integers $k,m$ with $3\leq k\leq 2^m-4$ the pair $(\mathbb{F}_{2^m}^*,W_k)$ is a $(2^m-1,k,\lambda_k)$-BIBD, which is proved above. Then the remainder of this subsection is to show that the
balance parameter $\lambda_k$ is obtained in recursive relations. The method we use is basically by counting. For some element $\alpha\in\mathbb{F}_{2^m}^*,$ the numbers of blocks involved with $\alpha$ are given below.
\begin{lem}         \label{lem_IJreccurence}
For $3\leq k\leq 2^m-4$ and some element $\alpha\in\mathbb{F}_{2^m}^*,$ let
$$I_k^\alpha=\{B\subseteq \mathbb{F}_{2^m}\setminus\{0,\alpha\}~\mid~|B|=k~\text{and}~\sum_{i\in B}i=\alpha\}$$
and
$$J_k^\alpha=\{B\subseteq \mathbb{F}_{2^m}\setminus\{0,\alpha\}~\mid~|B|=k~\text{and}~\sum_{i\in B}i=0\}.$$
Then
\begin{equation}
|I_k^\alpha|=\left\{\begin{array}{ll}
                        |J_k^\alpha|, & \text{if}~k\equiv 1,3~(\text{mod}~4) \\
                        |J_k^\alpha|+{2^{m-1}-1 \choose k/2}, & \text{if}~k\equiv 2~(\text{mod}~4) \\
                        |J_k^\alpha|-{2^{m-1}-1 \choose k/2}, & \text{if}~k\equiv 0~(\text{mod}~4).
                      \end{array}\right.
\nonumber
\end{equation}
\end{lem}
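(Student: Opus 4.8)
The plan is to pair up the elements of $I_k^\alpha$ with those of $J_k^\alpha$ through a size-preserving involution that alters the block sum by exactly $\alpha$, and then to account separately for the blocks on which this involution cannot act. By Remark~\ref{rem_H}, the set $\mathpzc{H}_\alpha$ partitions $\mathbb{F}_{2^m}$ into $2^{m-1}$ pairs $\{x,x+\alpha\}$; discarding the pair $\{0,\alpha\}$ leaves a partition of $\mathbb{F}_{2^m}\setminus\{0,\alpha\}$ into $2^{m-1}-1$ pairs. For a block $B$ in either $I_k^\alpha$ or $J_k^\alpha$, each such pair meets $B$ in $0$, $1$, or $2$ points, and a pair $\{x,x+\alpha\}$ with both points in $B$ contributes $x+(x+\alpha)=\alpha$ to the sum $\sum_{i\in B}i$.

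First I would treat the generic blocks, namely those $B$ for which $B\setminus(B+\alpha)$ is nonempty, equivalently those meeting some pair in exactly one point. For such a $B$ let $\beta$ be its representative with respect to a fixed ordering $\mathcal{O}_\alpha$ (Definition~\ref{defn_representative}) and set $\psi(B)=B\setminus\{\beta\}\cup\{\beta+\alpha\}$. Since $\beta$ is the lone point of $B$ in its pair, $\beta+\alpha\notin B$ and $\beta+\alpha\in\mathbb{F}_{2^m}\setminus\{0,\alpha\}$, so $\psi(B)$ is again a $k$-subset of $\mathbb{F}_{2^m}\setminus\{0,\alpha\}$, and its sum differs from that of $B$ by $\alpha$; because $\alpha+\alpha=0$, this interchanges sum $0$ and sum $\alpha$. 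The crucial point is that $\psi$ is an involution: toggling $\beta$ does not change which pairs meet the block in a single point, and the pair $\{\beta,\beta+\alpha\}$ keeps the same value under $\mathcal{O}_\alpha$, so $\beta+\alpha$ is the representative of $\psi(B)$ and $\psi(\psi(B))=B$. This is the same representative bookkeeping already used in Lemma~\ref{lem_eqsize}, and verifying it cleanly is the main obstacle. Granting it, $\psi$ restricts to a bijection between the generic blocks of $I_k^\alpha$ and those of $J_k^\alpha$, so these contribute equally to $|I_k^\alpha|$ and $|J_k^\alpha|$.

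It then remains to count the exceptional blocks, those with $B\setminus(B+\alpha)=\emptyset$, i.e.\ those $B$ that are unions of complete pairs $\{x,x+\alpha\}$. Such a block has even size, say $k=2s$, is determined by a choice of $s$ pairs among the $2^{m-1}-1$ available, and has sum $s\alpha$, which equals $0$ when $s$ is even and $\alpha$ when $s$ is odd. Hence when $k$ is odd there are no exceptional blocks and $|I_k^\alpha|=|J_k^\alpha|$; when $k\equiv 2\pmod 4$ the $\binom{2^{m-1}-1}{k/2}$ exceptional blocks all lie in $I_k^\alpha$, giving $|I_k^\alpha|=|J_k^\alpha|+\binom{2^{m-1}-1}{k/2}$; and when $k\equiv 0\pmod 4$ they all lie in $J_k^\alpha$, giving $|I_k^\alpha|=|J_k^\alpha|-\binom{2^{m-1}-1}{k/2}$. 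Since $k\equiv 1,3\pmod 4$ is exactly the odd case, this yields all three lines of the claimed formula.
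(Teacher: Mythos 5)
Your proposal is correct and follows essentially the same route as the paper: the paper's proof uses the identical toggle-the-representative map $\phi(B)=B\setminus\{\beta\}\cup\{\beta+\alpha\}$ to biject the blocks with $B\setminus(B+\alpha)\neq\emptyset$ between $I_k^\alpha$ and $J_k^\alpha$, and isolates the same exceptional set $L_k^\alpha=\{B \mid B=B+\alpha\}$ of unions of $k/2$ pairs, counted as $\binom{2^{m-1}-1}{k/2}$ and assigned to $I_k^\alpha$ or $J_k^\alpha$ according to the parity of $k/2$. Your packaging of the map as a single involution (with the check that the pair of the representative is unchanged, so $\beta+\alpha$ is the representative of the image) is a slightly cleaner way of stating the same well-definedness argument the paper carries out case by case.
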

\begin{proof}
We will prove this result by the mappings between $I_k^\alpha$ and $J_k^\alpha.$
This proof can be divided into three cases.

\medskip

\textbf{Case 1:} $k\equiv 1,3~(\text{mod}~4).$ Since $k$ is odd, for each $B\in I_k^\alpha$ we have $B\setminus(B+\alpha)$ is nonempty. Hence, there exists $\beta\in B$ such that $\beta$ is the representative of $B$ with respect to some proper ordering $\mathcal{O}_\alpha$ of $\mathpzc{H}_\alpha.$ In this case, the mapping $\phi: I_k^\alpha \rightarrow J_k^\alpha$ defined by
$$\phi(B)=B\setminus\{\beta\}\cup\{\beta+\alpha\}$$
is a bijection. Therefore, one has $|I_k^\alpha|=|J_k^\alpha|.$

\medskip

In the following argument, let
$$L_k^\alpha=\{B\subseteq \mathbb{F}_{2^m}^*~\mid~|B|=k~\text{and}~B=B+\alpha\}$$
for even $k.$

\medskip

\textbf{Case 2:} $k\equiv 2~(\text{mod}~4).$ In this case, $k/2$ is odd, and every $B\in L_k^\alpha$ is with $\sum_{i\in B}i=\alpha.$ Hence, $L_k^\alpha\subseteq I_k^\alpha.$ Besides, since $B\setminus(B+\alpha)$ is nonempty for each $B\in I_k^\alpha\setminus L_k^\alpha,$ one has the representative $\beta$ of $B$ with respect to $\mathcal{O}_\alpha.$ Consequently, the mapping $\phi:~I_k^\alpha\setminus L_k^\alpha \rightarrow J_k^\alpha$ is also a bijection, and thus $|I_k^\alpha|-|L_k^\alpha|=|J_k^\alpha|.$ Moreover, we can see that
$|L_k^\alpha|={2^{m-1}-1 \choose k/2}$ because $|\mathpzc{H}_\alpha\setminus\{\{0,\alpha\}\}|=2^{m-1}-1.$ The equality in case 2 follows.

\medskip

\textbf{Case 3:} $k\equiv 0~(\text{mod}~4).$ Similarly, since $k/2$ is even, one has the bijection $\phi:~I_k^\alpha \rightarrow J_k^\alpha\setminus L_k^\alpha.$ Therefore, $|I_k^\alpha|=|J_k^\alpha|-|L_k^\alpha|,$ and the proof is completed.
\end{proof}

\medskip

For $k\geq 3,$ $(\mathbb{F}_{2^m}^*,W_k)$ is a BIBD by Theorem~\ref{thm_BIBD}. Let $b_k=|W_k|$ be the number of blocks and $r_k$ denote the number of blocks in which each point occurs. The following result is helpful to evaluate the values of those parameters.
\begin{thm}         \label{thm_BIBDrecurrence}
For $3\leq k \leq 2^m-4,$ there are the following recurrence relations
\begin{equation}
r_{k+1}=\left\{\begin{array}{ll}
                        b_k-r_k, & \text{if}~k\equiv 1,3~(\text{mod}~4) \\
                        b_k-r_k+{2^{m-1}-1 \choose k/2}, & \text{if}~k\equiv 2~(\text{mod}~4) \\
                        b_k-r_k-{2^{m-1}-1 \choose k/2}, & \text{if}~k\equiv 0~(\text{mod}~4)
                      \end{array}\right.
\nonumber
\end{equation}
where $r_{2^m-3}:=0.$
\end{thm}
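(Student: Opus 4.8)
The plan is to establish the recurrence by relating the quantity $r_{k+1}$ (the replication number, i.e.\ the number of blocks of $W_{k+1}$ through a fixed point) to the block counts for $W_k$ via a careful accounting of $k$-subsets summing to a fixed nonzero value. Fix a nonzero element $\alpha\in\mathbb{F}_{2^m}^*$. Because $(\mathbb{F}_{2^m}^*,W_{k+1})$ is a BIBD by Theorem~\ref{thm_BIBD}, the number $r_{k+1}$ equals the number of blocks $B\in W_{k+1}$ with $\alpha\in B$; each such block has the form $B=\{\alpha\}\cup B'$ where $B'\subseteq\mathbb{F}_{2^m}\setminus\{0,\alpha\}$, $|B'|=k$, and $\sum_{i\in B'}i=\alpha$ (so that the full block sums to $0$). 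In other words, $r_{k+1}=|I_k^\alpha|$ in the notation of Lemma~\ref{lem_IJreccurence}. The heart of the argument is then to rewrite $|J_k^\alpha|$ in terms of $b_k$ and $r_k$, and invoke Lemma~\ref{lem_IJreccurence} to convert $|J_k^\alpha|$ into $|I_k^\alpha|$.

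The key computation is the identity $|J_k^\alpha|=b_k-r_k$. I would argue this by inclusion--exclusion on the two forbidden points $0$ and $\alpha$: the set $J_k^\alpha$ consists of all $k$-subsets of $\mathbb{F}_{2^m}^*$ summing to $0$ that avoid $\alpha$. Since $W_k$ is exactly the collection of $k$-subsets of $\mathbb{F}_{2^m}^*$ summing to $0$, we have $|W_k|=b_k$, and among these the ones containing $\alpha$ number exactly $r_k$ by the definition of the replication number (these are blocks of the BIBD $(\mathbb{F}_{2^m}^*,W_k)$ through the point $\alpha$). The point $0$ never appears since blocks are subsets of $\mathbb{F}_{2^m}^*$, so no separate exclusion for $0$ is needed. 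Hence $|J_k^\alpha|=b_k-r_k$, the count of zero-sum $k$-subsets of $\mathbb{F}_{2^m}^*$ that omit $\alpha$.

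Combining these, $r_{k+1}=|I_k^\alpha|$ and $|I_k^\alpha|$ is expressed through $|J_k^\alpha|=b_k-r_k$ by the three cases of Lemma~\ref{lem_IJreccurence}, which directly yields the three-case recurrence in the statement, with the binomial correction term $\binom{2^{m-1}-1}{k/2}$ added when $k\equiv 2\pmod 4$ and subtracted when $k\equiv 0\pmod 4$. The boundary convention $r_{2^m-3}:=0$ is consistent since a block of size $2^m-2$ through a fixed point would force the complementary single point to sum to $0$, which is impossible in $\mathbb{F}_{2^m}^*$; I would note this to justify terminating the recursion. The main obstacle, and the step requiring the most care, is the precise bookkeeping identifying $r_{k+1}$ with $|I_k^\alpha|$ and $r_k$ with the number of zero-sum blocks of $W_k$ containing $\alpha$: one must verify that the bijection $B\leftrightarrow B'=B\setminus\{\alpha\}$ is exactly the translation between ``$(k+1)$-blocks through $\alpha$'' and ``$k$-subsets avoiding $0,\alpha$ that sum to $\alpha$,'' and that the replication numbers are well-defined and point-independent, which is precisely what Theorem~\ref{thm_BIBD} guarantees. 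Once this correspondence is pinned down, the rest is an immediate substitution into Lemma~\ref{lem_IJreccurence}.
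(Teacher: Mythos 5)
Your proposal is correct and follows essentially the same route as the paper: identifying $r_{k+1}$ with $|I_k^\alpha|$ via the correspondence $B\mapsto B\cup\{\alpha\}$, computing $|J_k^\alpha|=b_k-r_k$ by inclusion--exclusion, and then substituting into Lemma~\ref{lem_IJreccurence}. (One small aside: your justification of $r_{2^m-3}:=0$ refers to blocks of size $2^m-2$, whereas the relevant fact is that a zero-sum block of size $2^m-3$ would have a two-element complement summing to $0$, impossible for distinct elements in characteristic $2$; this does not affect the main argument.)
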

\begin{proof}
We prove it by counting the values of $|I_k^\alpha|$ and $|J_k^\alpha|$ defined in Lemma~\ref{lem_IJreccurence}.
From definition, we can observe that
\begin{eqnarray}
|I_k^\alpha|&=&\big|\{B\subseteq \mathbb{F}_{2^m}\setminus\{0,\alpha\}~\mid~|B|=k~\text{and}~\sum_{i\in B}i=\alpha\}\big|
\nonumber   \\
&=&\big|\{\tilde{B}\subseteq \mathbb{F}_{2^m}^*~\mid~|B|=k+1~\text{and}~\sum_{i\in \tilde{B}}i=0\}\big|
\nonumber   \\
&=&r_{k+1}
\nonumber
\end{eqnarray}
by letting $\tilde{B}=B\cup\{\alpha\}$ for each $B\in I_k^\alpha.$ On the other hand,
\begin{eqnarray}
|J_k^\alpha|&=&\big|\{B\subseteq \mathbb{F}_{2^m}\setminus\{0,\alpha\}~\mid~|B|=k~\text{and}~\sum_{i\in B}i=0\}\big|
\nonumber   \\
&=&\big|\{B\subseteq \mathbb{F}_{2^m}^*~\mid~|B|=k~\text{and}~\sum_{i\in B}i=0\}\big|
\nonumber   \\
&&-\big|\{B\subseteq \mathbb{F}_{2^m}^*~\text{with}~\alpha\in B~\mid~|B|=k~\text{and}~\sum_{i\in B}i=0\}\big|
\nonumber   \\
&=&b_k-r_k
\nonumber
\end{eqnarray}
by applying the principle of inclusion and exclusion. The result directly follows from Lemma~\ref{lem_IJreccurence}.
\end{proof}

\medskip

The initial conditions of Theorem~\ref{thm_BIBDrecurrence} are provided as follows.
\begin{rem}     \label{rem_BIBD_ini_condi}
It is clear that $\lambda_3=1,$ since there exists a unique block $\{i,j,i+j\}\in W_3$ for any two distinct elements $i,j\in \mathbb{F}_{2^m}^*$.
Then by Theorem~\ref{thm_basic} one has
$$r_3=\frac{2^m-2}{2}~~~\text{and}~~~ b_3=\frac{(2^m-1)(2^m-2)}{3!}.$$
Actually, while $k=2,$ it is straightforward to define $b_2=r_2=\lambda_2=0$ because there are no blocks in $W_2.$ The recurrence formula in Theorem~\ref{thm_BIBDrecurrence} also indicates that $r_3={2^{m-1}-1 \choose 1}=(2^m-2)/2.$
\end{rem}

\medskip

Now, the recurrence relations of balance parameters $\lambda_k$ are presented in the following which is directly from Theorem~\ref{thm_BIBDrecurrence} and Theorem~\ref{thm_basic}.
\begin{cor}     \label{cor_lambda_k}
For $3\leq k\leq 2^m-4,$
\begin{equation}
\lambda_{k+1}=\left\{\begin{array}{ll}
                        \frac{2^m-k-1}{k-1}\lambda_k, & \text{if}~k\equiv 1,3~(\text{mod}~4) \\
                        \frac{2^m-k-1}{k-1}\lambda_k+{2^{m-1}-2 \choose k/2-1}, & \text{if}~k\equiv 2~(\text{mod}~4) \\
                        \frac{2^m-k-1}{k-1}\lambda_k-{2^{m-1}-2 \choose k/2-1}, & \text{if}~k\equiv 0~(\text{mod}~4)
                      \end{array}\right.
\nonumber
\end{equation}
where $\lambda_{2^m-3}:=0.$
In one formula,
\begin{equation}
\lambda_{k+1}=\frac{2^m-k-1}{k-1}\lambda_k-\cos\frac{k\pi}{2}{2^{m-1}-2 \choose \lfloor k/2-1\rfloor}.
\nonumber
\end{equation}
\qed
\end{cor}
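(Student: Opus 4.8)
The plan is to derive Corollary~\ref{cor_lambda_k} as a direct translation of the block-count recurrence in Theorem~\ref{thm_BIBDrecurrence} into a recurrence for the balance parameters, using the parameter relations of Theorem~\ref{thm_basic}. The key observation is that both $b_k$ and $r_k$ can be expressed in terms of $\lambda_k$ for each fixed $k$, so every occurrence of $b_k-r_k$ in Theorem~\ref{thm_BIBDrecurrence} can be rewritten purely in terms of $\lambda_k$, and the left-hand side $r_{k+1}$ can be rewritten in terms of $\lambda_{k+1}$.

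First I would recall from Theorem~\ref{thm_basic} applied to the $(2^m-1,k,\lambda_k)$-BIBD $(\mathbb{F}_{2^m}^*,W_k)$ (which is a BIBD by Theorem~\ref{thm_BIBD}) that $r_k=\frac{\lambda_k(2^m-2)}{k-1}$ and $b_k=\frac{(2^m-1)r_k}{k}$. From these I would compute
\begin{equation}
b_k-r_k=\Bigl(\tfrac{2^m-1}{k}-1\Bigr)r_k=\tfrac{2^m-1-k}{k}\cdot\tfrac{\lambda_k(2^m-2)}{k-1}.
\nonumber
\end{equation}
Likewise $r_{k+1}=\frac{\lambda_{k+1}(2^m-2)}{k}$. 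Substituting both expressions into each branch of Theorem~\ref{thm_BIBDrecurrence} and dividing through by the common factor $\frac{2^m-2}{k}$ should collapse the $b_k-r_k$ term into exactly $\frac{2^m-k-1}{k-1}\lambda_k$, which is the leading term in every branch of the corollary. The correction terms $\binom{2^{m-1}-1}{k/2}$ must then be rescaled by the same factor $\frac{k}{2^m-2}$; the identity $\frac{k}{2^m-2}\binom{2^{m-1}-1}{k/2}=\binom{2^{m-1}-2}{k/2-1}$ (obtained from $\binom{n}{r}=\frac{n}{r}\binom{n-1}{r-1}$ with $n=2^{m-1}-1$, $r=k/2$, since $\frac{k}{2^m-2}=\frac{k/2}{2^{m-1}-1}$) converts them into the stated correction terms.

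For the unified closed form, I would verify that the sign and support of the correction term match the factor $-\cos\frac{k\pi}{2}$: this cosine equals $0$ when $k$ is odd, equals $-1$ when $k\equiv 2\ (\mathrm{mod}\ 4)$, and equals $+1$ when $k\equiv 0\ (\mathrm{mod}\ 4)$, so $-\cos\frac{k\pi}{2}$ reproduces the $0,+1,-1$ pattern of the three branches exactly. The floor $\lfloor k/2-1\rfloor$ agrees with $k/2-1$ whenever $k$ is even, and for odd $k$ the factor in front vanishes, so the binomial argument is immaterial there; thus the single formula is valid for all $k$ in range. The initial condition $\lambda_{2^m-3}:=0$ is inherited from $r_{2^m-3}:=0$ in Theorem~\ref{thm_BIBDrecurrence} via $r_k=\frac{\lambda_k(2^m-2)}{k-1}$.

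The computation is entirely routine once the substitutions are made, so I anticipate no genuine obstacle; the only point requiring care is the binomial rescaling identity and confirming that the common denominator $\frac{2^m-2}{k}$ factors cleanly out of all three branches, including verifying that the leading coefficient simplifies to $\frac{2^m-k-1}{k-1}$ rather than some unsimplified ratio. Matching the trigonometric sign convention to the congruence classes of $k$ is the second detail to check, but it is immediate from evaluating $\cos\frac{k\pi}{2}$ at $k\equiv 0,1,2,3\ (\mathrm{mod}\ 4)$.
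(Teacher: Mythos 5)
Your proposal is correct and follows exactly the route the paper intends: the corollary is stated as an immediate consequence of Theorem~\ref{thm_BIBDrecurrence} and Theorem~\ref{thm_basic}, obtained by substituting $r_k=\frac{\lambda_k(2^m-2)}{k-1}$ and $b_k=\frac{(2^m-1)r_k}{k}$ into the recurrence and simplifying, with the binomial rescaling $\frac{k}{2^m-2}\binom{2^{m-1}-1}{k/2}=\binom{2^{m-1}-2}{k/2-1}$ handling the correction terms. Your verification of the sign pattern of $-\cos\frac{k\pi}{2}$ and of the inherited convention $\lambda_{2^m-3}:=0$ supplies precisely the details the paper leaves implicit.
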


\medskip

Based on the above results, the parameters $\lambda_k$ with $3\leq k\leq 7$ are listed in Table 1 for some $m\geq 4.$
%\extrarowheight=5pt
\renewcommand{\arraystretch}{1.5}
\begin{center}
{\bf Table 1:} The balance parameter $\lambda_k$ of the BIBD $(\mathbb{F}_{2^m}^*,W_k)$ for $3\leq k\leq 7$.

\bigskip
\begin{tabular}{|c|c|}
\hline
      & $\lambda_{k}$ \\  \hline
$k=3$ & $1$      \\ \hline
$k=4$ & $\frac{2^m-4}{2}$  \\ \hline
$k=5$ & $\frac{(2^m-4)(2^m-8)}{3!}$  \\ \hline
$k=6$ & $\frac{(2^m-4)(2^m-6)(2^m-8)}{4!}$  \\ \hline
$k=7$ & $\frac{(2^m-4)(2^m-6)(2^{2m}-15\cdot 2^m+71)}{5!}$ \\ \hline
\end{tabular}\\
\end{center}

As a consequence of Theorem~\ref{thm_BIBD} and Corollary~\ref{cor_lambda_k},
the parameters $(v,k,\lambda_k)$ of BIBDs with small block sizes are listed below: $(7,3,1),$ $(15,3,1),$ $(31,3,1),$ $(7,4,2),$ $(15,4,6),$ $(31,4,14),$ $(15,5,16),$ $(31,5,112),$ $(15,6,40),$ and $(15,7,87).$

\medskip

A series of BIBDs obtained in Theorem~\ref{thm_BIBD} will be used to construct a new family of GDDs as shown in the next section.

\bigskip

%%%%%%%%%%%%%%%%%%%%%%%%%%%%%%%%%%%%%%%%
%%%%%%%%%%%    Section GDD   %%%%%%%%%%%
%%%%%%%%%%%%%%%%%%%%%%%%%%%%%%%%%%%%%%%%
\section{A construction of group divisible designs}     \label{sec_GDD}
This section consists of two subsections.
Section~\ref{subsec_GDDnotations} gives the definition of a GDD.
Section~\ref{subsec_proposedGDD} is the main result of this paper, which presents new GDDs with arbitrary block sizes.

\subsection{Notations}      \label{subsec_GDDnotations}
GDD is a topic generalized from the pairwise balanced design (well-known as PBD) \cite[p. 231]{cd:07}.
Since GDD has been widely applied to graphs \cite{fr:98} and matrices \cite{ss:98}, many authors proposed different constructions of a GDD. One can see \cite{fr:98,ss:98,hs:04}, \cite[Definition~1.4.2]{a:90}, \cite[Definition~7.14]{s:04} and \cite[Definition~5.5]{w:09} for some examples.
The definition of a GDD is as follows.
\begin{defn}    \cite[p. 231]{cd:07}    \label{defn_GDD}
Let $k$ and $\lambda$ be positive integers. A {\it group divisible design} $(k,\lambda)$-GDD is a triple $(X,\mathcal{G}, \mathcal{B})$, where $X$ is a finite set of cardinality $v$, $\mathcal{G}$ is a partition of $X$ into \emph{groups}, and $\mathcal{B}$ is a family of subsets ({\it blocks}) of $X$ that satisfy
\begin{enumerate}
\item[(i)] if $B \in \mathcal{B}$ then $|B|=k$,
\item[(ii)] every pair of distinct elements of $X$ occurs in exactly $\lambda$ blocks or one group, but not both, and
\item[(iii)] $|\mathcal{G}|>1.$
\end{enumerate}
In particular, (ii) is called the balance property and $\lambda$ is called the balance parameter of $(X,\mathcal{G}, \mathcal{B}).$
\end{defn}

\medskip

\subsection{Proposed GDDs}      \label{subsec_proposedGDD}
Throughout this subsection, let $\alpha$ be an element in $\mathbb{F}_{2^{m+1}}^*$ and
$V_\alpha=\mathbb{F}_{2^{m+1}}\setminus\{0,\alpha\}.$
Consider the collection $U_{\alpha,2}$ of some $2$-subsets of $V_\alpha$ such that
$$U_{\alpha,2}=\big\{\{i,j\}\subseteq V_\alpha~|~i+j=\alpha\big\}.$$
Furthermore, for each $3\leq k\leq 2^m-1,$
\begin{equation}
U_{\alpha,k}=\big\{B\subseteq V_\alpha~\big|~|B|=k,\sum_{i\in B}i=\alpha,~\text{and}~B\cap(B+\alpha)=\emptyset\big\}.
\nonumber
\end{equation}

\medskip

\begin{lem}    \label{prop_part1}
$U_{\alpha,2}$ forms a partition of $V_{\alpha}.$
\end{lem}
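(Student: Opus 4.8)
The plan is to show that $U_{\alpha,2}$ is a collection of pairwise disjoint nonempty subsets of $V_\alpha$ whose union is all of $V_\alpha$, which are precisely the requirements for a partition. Recall from Remark~\ref{rem_H} that $\{0,\alpha\}$ is a subgroup of $\langle\mathbb{F}_{2^{m+1}},+\rangle$ because the field has characteristic $2$, so its cosets $\{x,x+\alpha\}$ for $x\in\mathbb{F}_{2^{m+1}}$ partition $\mathbb{F}_{2^{m+1}}$. The observation driving the proof is that $U_{\alpha,2}$ is exactly the set of these cosets $\{x,x+\alpha\}$ with the single trivial coset $\{0,\alpha\}$ removed: indeed, a $2$-subset $\{i,j\}\subseteq V_\alpha$ satisfies $i+j=\alpha$ if and only if $j=i+\alpha$, i.e.\ $\{i,j\}=\{i,i+\alpha\}$ is a coset, and the condition $\{i,j\}\subseteq V_\alpha=\mathbb{F}_{2^{m+1}}\setminus\{0,\alpha\}$ excludes precisely the coset $\{0,\alpha\}$.

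First I would verify that each block is genuinely a $2$-element set: if $\{i,j\}\in U_{\alpha,2}$ then $i\neq j$ (since $i=j$ would force $2i=0=\alpha$, contradicting $\alpha\neq 0$), so every block has cardinality exactly $2$ and is nonempty. Second, I would establish disjointness: any two distinct cosets of the subgroup $\{0,\alpha\}$ are disjoint, and the members of $U_{\alpha,2}$ are distinct cosets, so distinct blocks in $U_{\alpha,2}$ are disjoint. Third, I would prove the covering property: given any $x\in V_\alpha$, the element $x+\alpha$ lies in $\mathbb{F}_{2^{m+1}}$, and since $x\notin\{0,\alpha\}$ we also have $x+\alpha\notin\{0,\alpha\}$ (as $x+\alpha=0$ would give $x=\alpha$ and $x+\alpha=\alpha$ would give $x=0$), so $\{x,x+\alpha\}\subseteq V_\alpha$ with $x+(x+\alpha)=\alpha$; hence $\{x,x+\alpha\}\in U_{\alpha,2}$ and contains $x$. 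Thus every point of $V_\alpha$ lies in some block.

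The combination of these three facts—every block is a nonempty $2$-subset, distinct blocks are disjoint, and the blocks cover $V_\alpha$—is exactly the definition of a partition, completing the argument. I do not anticipate a genuine obstacle here; the only point requiring a little care is the bookkeeping that the excluded coset $\{0,\alpha\}$ is the unique coset not lying inside $V_\alpha$, so that the partition of $V_\alpha$ one obtains is the restriction to $V_\alpha$ of the coset partition $\mathpzc{H}_\alpha$ of $\mathbb{F}_{2^{m+1}}$ with its single trivial class removed. This identification with $\mathpzc{H}_\alpha\setminus\{\{0,\alpha\}\}$ also makes transparent that $|U_{\alpha,2}|=2^{m+1}/2-1=2^{m}-1$, consistent with $V_\alpha$ having $2^{m+1}-2$ points each of which is covered exactly once.
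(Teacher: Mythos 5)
Your proof is correct and is essentially the paper's argument written out in full: the paper simply cites Remark~\ref{rem_H}, which identifies $\mathpzc{H}_\alpha$ as the coset partition of the field by the subgroup $\{0,\alpha\}$, and your proposal makes explicit that $U_{\alpha,2}$ is exactly $\mathpzc{H}_\alpha\setminus\{\{0,\alpha\}\}$ restricted to $V_\alpha$, then verifies nonemptiness, disjointness, and covering. No gaps; the extra bookkeeping you supply is exactly what the paper leaves implicit.
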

\begin{proof}
It immediately follows by Remark~\ref{rem_H}.
\end{proof}

\medskip

To prove the main theorem, a result has to be introduced.
\begin{rem}     \label{rem_isomorphism}
Let $A=\{0,\alpha\}.$ Then $\langle A,+\rangle$ is a subgroup of $\langle \mathbb{F}_{2^{m+1}},+\rangle.$ It is clear that the quotient group $\mathbb{F}_{2^{m+1}}/A$ is with zero $A.$ Since every nonzero element in $\mathbb{F}_{2^{m+1}}/A$ has order $2$ and $\mathbb{F}_{2^m}$ has characteristic $2,$ $\mathbb{F}_{2^{m+1}}/A$ is isomorphic to $\langle \mathbb{F}_{2^m},+\rangle$ by the fundamental theorem of finitely generated abelian groups.
\end{rem}

\medskip

Recall that for $3\leq k \leq 2^m-4$ the pair $(\mathbb{F}_{2^m}^*,W_k)$ is a $(2^m-1,k,\lambda_k)$-BIBD as shown in Theorem~\ref{thm_BIBD}. The next theorem states that the triple $(V_{\alpha},U_{\alpha,2},U_{\alpha,k})$ is a $(k,\lambda_k')$-GDD with balance parameter $\lambda_k'=2^{k-3}\lambda_k$.

\medskip

\begin{thm}     \label{thm_GDDmain}
For each $3\leq k\leq 2^m-4,$ $(V_{\alpha},U_{\alpha,2},U_{\alpha,k})$ is a $(k,\lambda_k')$-GDD with balance parameter $\lambda_k'=2^{k-3}\lambda_k.$
\end{thm}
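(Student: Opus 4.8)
The plan is to verify the three defining properties of a $(k,\lambda'_k)$-GDD for the triple $(V_\alpha, U_{\alpha,2}, U_{\alpha,k})$ in order, with the balance property being the crux. Property (i), that every block in $U_{\alpha,k}$ has size $k$, is immediate from the definition of $U_{\alpha,k}$. Property (iii), that $|U_{\alpha,2}|>1$, follows since $U_{\alpha,2}$ partitions $V_\alpha$ (Lemma~\ref{prop_part1}) into $2$-subsets and $|V_\alpha|=2^{m+1}-2>2$. So the real content is property (ii): every pair of distinct points of $V_\alpha$ lies either in exactly one group (a block of $U_{\alpha,2}$) or in exactly $\lambda'_k$ blocks of $U_{\alpha,k}$, but not both.

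First I would dispose of the ``group or blocks, but not both'' dichotomy. A pair $\{i,j\}$ forms a group exactly when $i+j=\alpha$. In that case both $i$ and $j$ cannot simultaneously lie in any $B\in U_{\alpha,k}$, because the condition $B\cap(B+\alpha)=\emptyset$ forbids a block from containing two elements differing by $\alpha$; hence such a pair appears in no block, consistent with (ii). For the complementary case $i+j\neq\alpha$, I must show the pair lies in exactly $\lambda'_k=2^{k-3}\lambda_k$ blocks, independent of the chosen pair — this simultaneously establishes balance and pins down the parameter.

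The key step is a counting argument that transports the problem to the BIBD $(\mathbb{F}_{2^m}^*,W_k)$ of Theorem~\ref{thm_BIBD}. The natural device is the quotient map $\pi:\mathbb{F}_{2^{m+1}}\to \mathbb{F}_{2^{m+1}}/A$ where $A=\{0,\alpha\}$, together with the isomorphism $\mathbb{F}_{2^{m+1}}/A\cong \mathbb{F}_{2^m}$ of Remark~\ref{rem_isomorphism}. Under $\pi$, a block $B\in U_{\alpha,k}$ — being a $k$-set meeting each coset $\{x,x+\alpha\}$ at most once (the condition $B\cap(B+\alpha)=\emptyset$) and summing to $\alpha$ — maps injectively to a $k$-subset of the nonzero elements of $\mathbb{F}_{2^m}$ whose sum is $\pi(\alpha)=0$, i.e.\ to an element of $W_k$. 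Conversely, each $\bar B\in W_k$ has exactly $2^k$ preimages under the coordinatewise choice of representatives, of which some sum to $0$ and some to $\alpha$ in $\mathbb{F}_{2^{m+1}}$; the ones summing to $\alpha$ are precisely the blocks of $U_{\alpha,k}$ lying over $\bar B$. I expect the main obstacle to be the bookkeeping here: counting, among the $2^k$ lifts of a fixed $\bar B$, exactly how many land in $U_{\alpha,k}$ and, more delicately, how many of those contain a prescribed pair $\{i,j\}$ with $\pi(i)\neq\pi(j)$. The factor $2^{k-3}$ should emerge as follows — fixing the two coordinates over $\pi(i)$ and $\pi(j)$ to be $i$ and $j$ leaves $k-2$ free binary choices, and the constraint that the total sum be $\alpha$ (rather than $0$) removes exactly one degree of freedom, giving $2^{k-3}$ lifts through the pair. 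Since each $\bar B\in W_k$ containing the pair $\{\pi(i),\pi(j)\}$ contributes $2^{k-3}$ blocks through $\{i,j\}$, and there are $\lambda_k$ such $\bar B$ by the balance of the BIBD, the pair lies in $\lambda_k\cdot 2^{k-3}=\lambda'_k$ blocks, establishing (ii) with the claimed parameter.

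I would structure the write-up so that the lifting-and-counting lemma is isolated first, verifying both that sum-$\alpha$ lifts correspond bijectively to $U_{\alpha,k}$ and that the parity constraint costs exactly one binary degree of freedom, before invoking $|W_k^{\,\overline i,\overline j}|=\lambda_k$ from Theorem~\ref{thm_BIBD}. The subtlety to watch is the well-definedness of the sum in the quotient: although $\pi(\alpha)=0$, a lift summing to $\alpha$ in $\mathbb{F}_{2^{m+1}}$ and one summing to $0$ both project to a sum of $0$ in $\mathbb{F}_{2^m}$, so the split into $U_{\alpha,k}$ versus the sum-$0$ lifts is genuinely a statement about the total in $\mathbb{F}_{2^{m+1}}$, governed by which coset representatives are chosen. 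Making that dependence explicit is what turns the rough $2^{k-3}$ heuristic into a proof.
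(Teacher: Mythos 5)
Your proposal is correct and follows essentially the same route as the paper's proof: pass to the quotient $\mathbb{F}_{2^{m+1}}/\{0,\alpha\}\cong\mathbb{F}_{2^m}$, identify the images of blocks with elements of $W_k$, and count that of the $2^{k-2}$ lifts through a fixed pair $\{i,j\}$ exactly half sum to $\alpha$, yielding $2^{k-3}\lambda_k$. Your explicit verification of the ``group or blocks, but not both'' dichotomy via $B\cap(B+\alpha)=\emptyset$ is a small completeness improvement over the paper's write-up, but the core argument is the same.
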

\begin{proof}
Let $i,j$ be two distinct elements in $V_\alpha$ with $i+j\neq \alpha.$
It suffices to show that there are $2^{k-3}\lambda_k$ blocks in $U_{\alpha,k}$ that contains $i$ and $j,$ where $\lambda_k$ is the balance parameter of the BIBD $(\mathbb{F}_{2^m}^*,W_k)$ proposed in Section~\ref{subsec_proposedBIBD}.
Let $A=\{0,\alpha\}\subseteq \mathbb{F}_{2^{m+1}},$ as mentioned in Remark~\ref{rem_isomorphism}.
Then there exists an isomorphism $\psi:~\mathbb{F}_{2^{m+1}}/A\rightarrow\mathbb{F}_{2^m}.$
Moreover, let $\overline{x}=\{x,x+\alpha\}$ for $x\in V_\alpha.$
One can see that for any $B\subseteq V_\alpha,$
\begin{equation}    \label{eq_GDDeq}
\sum_{\ell\in B}\overline{\ell}=A~~~\text{if and only if}~~~
\sum_{\ell\in B}\psi(\overline{\ell})=0\in \mathbb{F}_{2^m}.
\end{equation}
Note that $\sum_{\ell\in B}\overline{\ell}=\overline{\sum_{\ell\in B}\ell}.$
Hence if $\sum_{\ell\in B}\overline{\ell}=A$ then
$\sum_{\ell\in B}\psi(\overline{\ell})=\psi(\sum_{\ell\in B}\overline{\ell})=0,$ and vice versa.

\medskip

Let $B=\{i,j,x_1,x_2,\ldots,x_{k-2}\}$ be a $k$-subset of $V_\alpha$ with $i,j\in B$ and $B\cap(B+\alpha)=\emptyset.$
On the left-hand side of~\eqref{eq_GDDeq}, if $B$ satisfies the condition $\sum_{\ell\in B}\overline{\ell}=A$, then there are $2^{k-2}$ possible choices of $k$-subset $\tilde{B}=\{i,j,y_1,y_2,\ldots,y_{k-2}\}$ of $V_\alpha$ such that $\sum_{\ell\in\tilde{B}}\ell=\alpha$ or $0$ by letting $y_h\in\{x_h,x_h+\alpha\}$ for $h=1,2,\ldots,k-2.$ Note that every $\tilde{B}$ also has the properties $i,j\in\tilde{B}$ and $\tilde{B}\cap(\tilde{B}+\alpha)=\emptyset.$ Therefore, there are $2^{k-2}/2=2^{k-3}$ possible choices of $\tilde{B}$ with $\sum_{\ell\in \tilde{B}}\ell=\alpha$ corresponding to $B.$
On the other hand, since $\psi(\overline{i})$ and $\psi(\overline{j})$ are given, by Theorem~\ref{thm_BIBD} there are $\lambda_k$ blocks for the right-hand side of~\eqref{eq_GDDeq} provided that $B$ is a $k$-subset of $V_\alpha$ with $i,j\in B$ and $B\cap(B+\alpha)=\emptyset.$
In summary, there are $2^{k-3}\lambda_k$ ways to pick a $k$-subset $B\subseteq V_\alpha$ with $i,j\in B,$ $B\cap (B+\alpha)=\emptyset,$ and $\sum_{\ell\in B}\ell=\alpha.$
Namely, the balance parameter $\lambda_k'=2^{k-3}\lambda_k.$
The result follows.
\end{proof}

\medskip

From Remark~\ref{rem_BIBD_ini_condi}, $\lambda_3'=2^0\lambda_3=1.$ Then the recurrence relations of $\lambda_k'$ is given in the following which can be attained by Theorem~\ref{thm_GDDmain} and Corollary~\ref{cor_lambda_k}.
\begin{cor}     \label{cor_lambda_k'}
For each $3\leq k\leq 2^m-4,$
\begin{equation}
\lambda_{k+1}'=\left\{\begin{array}{ll}
        \frac{2^{m+1}-2k-2}{k-1}\lambda_k', & \text{if}~k\equiv 1,3~(\text{mod}~4) \\
        \frac{2^{m+1}-2k-2}{k-1}\lambda_k'+2^{k-2}{2^{m-1}-2 \choose k/2-1}, & \text{if}~k\equiv 2~(\text{mod}~4) \\
        \frac{2^{m+1}-2k-2}{k-1}\lambda_k'-2^{k-2}{2^{m-1}-2 \choose k/2-1}, & \text{if}~k\equiv 0~(\text{mod}~4)
                      \end{array}\right.
\nonumber
\end{equation}
where $\lambda_{2^m-3}':=0.$
In one formula,
\begin{equation}
\lambda_{k+1}'=\frac{2^{m+1}-2k-2}{k-1}\lambda_k'-\cos\frac{k\pi}{2}\cdot2^{k-2}{2^{m-1}-2 \choose \lfloor k/2-1\rfloor}.
\nonumber
\end{equation}
\qed
\end{cor}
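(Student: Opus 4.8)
The plan is to derive Corollary~\ref{cor_lambda_k'} purely as a consequence of the two ingredients advertised in the statement, namely Theorem~\ref{thm_GDDmain} and Corollary~\ref{cor_lambda_k}, treating it as a routine algebraic substitution rather than a fresh combinatorial argument. The key identity is $\lambda_k' = 2^{k-3}\lambda_k$, which holds for every admissible $k$; in particular it holds at both index $k$ and index $k+1$, so $\lambda_{k+1}' = 2^{(k+1)-3}\lambda_{k+1} = 2^{k-2}\lambda_{k+1}$. The whole corollary will then follow by inserting the recurrence for $\lambda_{k+1}$ from Corollary~\ref{cor_lambda_k} into this expression and simplifying.

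First I would fix $k$ with $3\leq k\leq 2^m-4$ and write $\lambda_{k+1}' = 2^{k-2}\lambda_{k+1}$. Then I would substitute the three branches of Corollary~\ref{cor_lambda_k} for $\lambda_{k+1}$, one congruence class at a time. In the case $k\equiv 1,3 \pmod 4$, we have $\lambda_{k+1}=\frac{2^m-k-1}{k-1}\lambda_k$, so
\begin{equation}
\lambda_{k+1}' = 2^{k-2}\cdot\frac{2^m-k-1}{k-1}\lambda_k
= \frac{2(2^m-k-1)}{k-1}\cdot 2^{k-3}\lambda_k
= \frac{2^{m+1}-2k-2}{k-1}\lambda_k',
\nonumber
\end{equation}
where in the last step I pull one factor of $2$ into the numerator (giving $2^{m+1}-2k-2$) and recombine $2^{k-3}\lambda_k=\lambda_k'$ by Theorem~\ref{thm_GDDmain}. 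This reproduces the first branch exactly, so the only real bookkeeping is to carry the correction term through the other two branches.

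Next I would handle the cases $k\equiv 2\pmod 4$ and $k\equiv 0\pmod 4$, where Corollary~\ref{cor_lambda_k} carries an additive term $\pm\binom{2^{m-1}-2}{k/2-1}$. Multiplying that term by the outer factor $2^{k-2}$ produces exactly the claimed correction $\pm\,2^{k-2}\binom{2^{m-1}-2}{k/2-1}$, while the main term transforms into $\frac{2^{m+1}-2k-2}{k-1}\lambda_k'$ precisely as in the odd case. This yields the second and third branches. To obtain the single-formula version it suffices to observe, exactly as in Corollary~\ref{cor_lambda_k}, that $\cos\frac{k\pi}{2}$ equals $0$ when $k$ is odd, equals $-1$ when $k\equiv 2\pmod 4$, and equals $+1$ when $k\equiv 0\pmod 4$, so the factor $-\cos\frac{k\pi}{2}$ supplies the sign pattern $0,+1,-1$ that matches the three branches; replacing $k/2-1$ by $\lfloor k/2-1\rfloor$ is harmless since the term vanishes for odd $k$ anyway.

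The main obstacle, such as it is, is purely notational rather than conceptual: one must be careful that the factor converting $\lambda_k$ into $\lambda_k'$ is $2^{k-3}$ (not $2^{k-2}$), so that exactly one of the two powers of two is absorbed into $\lambda_k'$ and the remaining factor of $2$ is the one that upgrades $2^m-k-1$ to $2^{m+1}-2k-2$. Tracking this single factor of $2$ correctly in each branch is the whole content of the proof; since Theorem~\ref{thm_GDDmain} and Corollary~\ref{cor_lambda_k} are both already established, no new combinatorial input is required, which is why the statement is flagged with \qed and presented without a displayed proof.
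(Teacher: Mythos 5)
Your proposal is correct and is exactly the argument the paper intends: the corollary is stated with no displayed proof precisely because it follows by substituting the recurrence of Corollary~\ref{cor_lambda_k} into $\lambda_{k+1}'=2^{k-2}\lambda_{k+1}$ (from Theorem~\ref{thm_GDDmain}) and absorbing $2^{k-3}\lambda_k$ back into $\lambda_k'$, leaving one factor of $2$ to turn $2^m-k-1$ into $2^{m+1}-2k-2$ and a factor $2^{k-2}$ on the binomial correction term. Your handling of the three congruence cases, the sign pattern of $-\cos\frac{k\pi}{2}$, and the single factor of $2$ is all accurate, so there is nothing to add.
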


\medskip

The balance parameters of the newly proposed GDD $(V_{\alpha},U_{\alpha,2},U_{\alpha,k})$ and the previously known GDD in \cite{lcc:17} with $3\leq k\leq 7$ are compared in Table 2, where $\alpha\in\mathbb{F}_{2^{m+1}}\setminus\{0\}$ and $V_\alpha=\mathbb{F}_{2^{m+1}}\setminus\{0,\alpha\}.$
%\extrarowheight=5pt
\renewcommand{\arraystretch}{1.5}
\begin{center}
{\bf Table 2:} Comparison on balance parameters $\lambda_k'$ of GDDs for $3\leq k\leq 7.$

\bigskip
\begin{tabular}{|c|c|c|}
\hline
      & $\lambda_k'$ of Proposed GDDs & $\lambda_k'$ in \cite{lcc:17}    \\ \hline
$k=3$ & $1$ & $1$     \\ \hline
$k=4$ & $\frac{2^{m+1}-8}{2}$ & $\frac{2^{m}-8}{2}$\\ \hline
$k=5$ & $\frac{(2^{m+1}-8)(2^{m+1}-16)}{3!}$ & $\frac{(2^{m}-8)(2^{m}-16)}{3!}$\\ \hline
$k=6$ & $\frac{(2^{m+1}-8)(2^{m+1}-12)(2^{m+1}-16)}{4!}$ & $\frac{(2^{m}-8)(2^{m}-16)(2^{m}-32)}{4!}$  \\ \hline
$k=7$ & $\frac{(2^{m+1}-8)(2^{m+1}-12)(2^{2m+2}-30\cdot 2^{m+1}+284)}{5!}$ & $\frac{(2^{m}-8)(2^{m}-16)(2^{m}-32)(2^{m}-64)}{5!}$  \\ \hline
\end{tabular}\\
\end{center}

From Theorem~\ref{thm_GDDmain} and Corollary~\ref{cor_lambda_k'},
the parameters $(k,\lambda_k')$ of GDDs with small block sizes are listed below:
$(3,1),$ $(4,4),$ $(4,12),$ $(4,28),$ and $(5,64).$

\section{Conclusion}        \label{sec_conclusion}
In this paper, based on the fact that $(\mathbb{F}_{2^m}^*,W_k)$ is a $(2^m-1,k,\lambda_k)$-BIBD for $3\leq k\leq 2^m-4$ in Theorem~\ref{thm_BIBD}, we show in Theorem~\ref{thm_GDDmain} that the triple $(V_\alpha,U_{\alpha,2},U_{\alpha,k})$ is a $(k,\lambda_k')$-GDD with balance parameter $\lambda_k'=2^{k-3}\lambda_k.$ A comparison of the results in~\cite{lcc:17,j:17} and this work are listed in Table~3.
Consequently, this paper has presented a new construction of GDDs, which can be proved by a fmaily of BIBDs. One advantage of the proposed GDDs is that their block sizes are much larger than those in~\cite{lcc:17,j:17}.

\medskip

\renewcommand{\arraystretch}{1.5}
\begin{center}
{\bf Table 3:} Comparison on different constructions of GDDs.

\bigskip
\begin{tabular}{l|l|l|l}
      & Points set $X$ & Block size $k$ & Balance parameter $\lambda$   \\ \hline
GDDs in \cite{lcc:17,j:17} & $\mathbb{F}_{2^m}^*\setminus\{1\}$ & $3\leq k\leq m$ & $\prod_{i=3}^{k-1}(2^m-2^i)/(k-2)!$ \\
Proposed GDDs & $\mathbb{F}_{2^{m+1}}^*\setminus\{\alpha\}$ & $3\leq k\leq 2^m-4$ & $\lambda_k'=2^{k-3}\lambda_k$
\end{tabular}\\
\end{center}

\bigskip
\section*{Acknowledgments}
This research is supported by the Ministry of Science and Technology of Taiwan R.O.C. under the projects
MOST 103-2632-M-214-001-MY3,
MOST 104-2115-M-214-002-MY2,
MOST 106-2115-M-214-004-MY2,
MOST 106-2811-M-214-001,
and MOST 106-2221-E-214-005.

\end{document}